\documentclass[12pt,a4paper,reqno]{amsart}
%--------------------------------------------------
\usepackage[T1]{fontenc}
\usepackage{microtype, a4wide, textcomp,fbb}
\usepackage{
    amsmath,  amssymb,  amsthm,   amscd,
    gensymb,  graphicx, etoolbox, 
    booktabs, stackrel, mathtools    
}
\usepackage[colorlinks=true, linkcolor=blue, citecolor=blue, urlcolor=blue, breaklinks=true]{hyperref}
\usepackage[capitalise]{cleveref}
\usepackage{placeins}
\setlength{\textwidth}{152mm}
%-------------------------------------------------------------------------
% Theorem and lemma style 
%------------------------------------------------------------------
\newtheorem{theorem}{Theorem}[section]

\newtheorem{lemma}[theorem]{Lemma}

\newtheorem{proposition}[theorem]{Proposition}
\theoremstyle{definition}
\newtheorem{definition}[theorem]{Definition}
\newtheorem{example}[theorem]{Example}

\newtheorem{remark}[theorem]{Remark}

\newtheorem{result}[theorem]{Result}

\usepackage{tikz}
\usetikzlibrary{arrows}
\usetikzlibrary{shapes}
\usepackage{float}
\usepackage{tabularx}
\usepackage{kantlipsum}
\usepackage{array}
\usepackage{ytableau}

%--------------------------------------------------
% The document starts
%--------------------------------------------------

\begin{document}

\title{Sum of squares of hook lengths and contents}

\author{Krishna Menon}
\address{Department of Mathematics, KTH Royal Institute of Technology, Stockholm, Sweden}
\email{puzhan@kth.se}

\keywords{Partition, Hook length, Content}
\subjclass{05A17, 05A19}

\begin{abstract}
It is known that for the Young diagram of any partition of an integer $n$, the sum of squares of the hook lengths of its cells is exactly $n^2$ more than that of the contents of its cells. 
That is, for any partition $\lambda$ of an integer $n$,
\begin{equation*}
    \sum_{u \in \lambda} h(u)^2 = n^2 + \sum_{u \in \lambda} c(u)^2.
\end{equation*}
We provide a bijective proof of this fact, thus solving a problem posed by Stanley. 
Along the way, we obtain a formula for the number of rectangles in the Young diagram of a partition. 
We also mention a result for sums of other powers of hook lengths and contents.
\end{abstract}

\maketitle

\section{Preliminaries}

A partition $\lambda$ of an integer $n$, denoted $\lambda \vdash n$, is a weakly decreasing sequence of non-negative integers $\lambda_1 \geq \lambda_2 \geq \lambda_3 \geq \cdots$ whose sum is $n$. 
We represent a partition $\lambda$ using its \emph{Young diagram}. 
This consists of rows of boxes with the $i^{th}$ row (from the top) consisting of $\lambda_i$ boxes. 
For example, the Young diagram of the partition $(5, 5, 4, 2)$ is given in \Cref{yd}.

\begin{figure}[H]
    \centering
    \ydiagram{5, 5, 4, 2}
    \caption{Young diagram of $(5, 5, 4, 2)$.}
    \label{yd}
\end{figure}

We label the boxes, which we call \emph{cells}, just as one would label the entries of a matrix. 
For example, the cell $(3, 4)$ in the Young diagram in \Cref{yd} is the last cell in the third row.

For any cell $u = (i, j) \in \lambda$ (we often identify $\lambda$ with its Young diagram), the \emph{hook} of $u$, denoted $H(u)$, is the set of cells $(k, l) \in \lambda$ such that $k = i$ and $l \geq j$ or $l = j$ and $k \geq i$. 
The highlighted cells in the Young diagram in \Cref{hookfig} form the hook of the starred cell.

\begin{figure}[H]
    \centering
    \begin{ytableau}
    *(white) & *(white)  &*(white) & *(white) & *(white)\\
    *(white) & *(yellow) \star  &*(yellow) & *(yellow) & *(yellow)\\
    *(white) & *(yellow)  &*(white) & *(white) \\
    *(white) & *(yellow)
    \end{ytableau}
    \caption{Highlighted cells form the hook of the starred cell.}
    \label{hookfig}
\end{figure}

For any cell $u = (i, j)$, its \emph{hook length} is the size of $H(u)$, which we denote by $h(u)$. 
It can also be checked that $h(u) = \lambda_i + \lambda'_j - i - j + 1$ where $\lambda'$ is the \emph{conjugate} of $\lambda$. 
The partition $\lambda'$ is the one obtained by flipping the Young diagram of $\lambda$ along the diagonal (changing rows to columns and vice-versa). 
The \emph{content} of a cell $u = (i, j)$ is simply defined to be $c(u) = j - i$.

We have the following result connecting these two notions.

\begin{theorem}\label{thm}
    For any $n \geq 1$ and $\lambda \vdash n$, we have
    \begin{equation}\label{eq}
        \sum_{u \in \lambda} h(u)^2 = n^2 + \sum_{u \in \lambda} c(u)^2.
    \end{equation}
\end{theorem}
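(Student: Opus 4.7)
My plan is to prove the identity by combining the arm-leg decomposition of the hook length with a few elementary bijections. Writing $h(u) = a(u) + l(u) + 1$, where $a(u) = \lambda_i - j$ is the arm length and $l(u) = \lambda'_j - i$ is the leg length of $u = (i, j)$, I first expand
\[ \sum_{u \in \lambda} h(u)^2 = \sum_u a(u)^2 + \sum_u l(u)^2 + 2\sum_u a(u) l(u) + 2\sum_u a(u) + 2\sum_u l(u) + n \]
and then match the resulting terms with $n^2 + \sum_u c(u)^2$.

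The first ingredient is a trivial within-row bijection: as $j$ ranges over $1, 2, \ldots, \lambda_i$, the values $a(u) = \lambda_i - j$ and $j - 1$ both enumerate $\{0, 1, \ldots, \lambda_i - 1\}$, so $\sum_u a(u)^k = \sum_u (j - 1)^k$ for every $k \geq 0$. The symmetric within-column bijection gives $\sum_u l(u)^k = \sum_u (i - 1)^k$. The second ingredient interprets the cross term: each ordered triple $(u, v, w)$ with $v$ strictly right of $u$ in its row and $w$ strictly below $u$ in its column is determined by, and determines, the pair of cells $\{v, w\}$ sitting in northeast--southwest position in $\lambda$, with $u$ forced to be the northwest corner of their bounding rectangle (which lies in $\lambda$ by the partition property). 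Hence $\sum_u a(u) l(u) = R'$, the number of northeast--southwest pairs of cells in $\lambda$.

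The third ingredient partitions the $n^2$ ordered pairs of cells by relative position,
\[ n^2 = n + 2\sum_u (j - 1) + 2\sum_u (i - 1) + 2R + 2R', \]
where $R$ counts northwest--southeast cell pairs, equivalently the number of axis-aligned rectangles with all four corners in $\lambda$. Each rectangle has a unique southeast corner, and each of the $(i-1)(j-1)$ cells northwest of $(i, j)$ automatically lies in $\lambda$, yielding the formula $R = \sum_u (i - 1)(j - 1)$ for the number of rectangles advertised in the abstract. The cell-wise algebraic identity $(i - 1)^2 + (j - 1)^2 = 2(i - 1)(j - 1) + (j - i)^2$, summed over $\lambda$, then gives $\sum_u (i - 1)^2 + \sum_u (j - 1)^2 = 2R + \sum_u c(u)^2$. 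Substituting all of this back, the non-content contributions collapse into $n^2$, producing $\sum_u h(u)^2 = n^2 + \sum_u c(u)^2$.

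The hard part will be the second ingredient, the identification $\sum_u a(u) l(u) = R'$, since this is the only place where the hook-length side of the identity gets converted into information about pairs of cells. Everything else reduces either to multiset reindexing within individual rows and columns, or to the elementary cell-wise identity $(A - B)^2 = A^2 - 2AB + B^2$ specialized to $A = i - 1$ and $B = j - 1$; the rectangle formula falls out of the same bookkeeping.
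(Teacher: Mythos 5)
Your proof is correct, but it takes a genuinely different route from the paper. You expand $h(u)^2 = (a(u)+l(u)+1)^2$ and match the resulting six sums against a classification of the $n^2$ ordered pairs of cells by relative position; each individual identification (the within-row/within-column multiset reindexing $\sum a(u)^k = \sum (j-1)^k$, the cross-term identity $\sum a(u)l(u) = R'$ via northwest corners of northeast--southwest pairs, the count $R = \sum (i-1)(j-1)$ of northwest--southeast pairs, and the cell-wise identity $(i-1)^2+(j-1)^2 = 2(i-1)(j-1)+c(u)^2$) checks out, and the final bookkeeping does collapse to $n^2 + \sum c(u)^2$. The paper instead builds a single explicit bijection from the set $H(\lambda)$ of triples (cell, two cells in its hook) onto the disjoint union of $N(\lambda)$ (ordered pairs of cells) and $C(\lambda)$ (a diagonal-hook model for $\sum c(u)^2$), splitting $H(\lambda)$ into three pieces according to whether the labels straddle a row and a column, lie in one line outside a single diagonal hook, or lie in one line inside a diagonal hook. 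What your approach buys is brevity and elementarity; what it gives up is precisely the point of the paper: Stanley's Problem 74(b) asks for a \emph{bijective} proof, meaning a single structure-preserving correspondence between the two sides, and your argument is a term-by-term algebraic reconciliation (several local bijections glued by cancellation) rather than one global bijection. It is much closer in spirit to the computational solution of Zaimi cited in the introduction. Your approach also does not directly yield the paper's later by-products (the new rectangle-counting formula involving partial hook lengths, and the inequalities for powers $k=1$ and $k\geq 3$), which fall out of the structure of the paper's bijection; on the other hand, you do recover along the way the standard formula $\sum_{(i,j)\in\lambda}(i-1)(j-1)$ for the number of thick rectangles, which the paper mentions as the already-known count.
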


This identity appears as Problem 74(a) in the supplementary problems for \cite[Chapter 7]{ec2}. 
This statement also appears as Example 5 in \cite[Section I.1]{mac}. 
A computational solution by Zaimi \cite{312799} and an inductive solution by Hopkins \cite{312775} are available on MathOverflow.

We give a bijective proof of this result, thus providing a solution to Supplementary Problem 74(b) of \cite[Chapter 7]{ec2}.

\section{Bijective proof of \Cref{thm}}\label{proofsec}

Here and in the sequel, we fix $n \geq 1$ and  a partition $\lambda$ of $n$. 
To prove \Cref{thm}, we interpret each term in \Cref{eq} in the following manner (the example that follows might make these interpretations clearer):

\begin{enumerate}
    \item Note that $\sum_{u \in \lambda} h(u)^2$ is the cardinality of the set
    \begin{equation*}
        H(\lambda) := \{(u, v_1, v_2) \mid u \in \lambda, v_1, v_2 \in H(u)\}.
    \end{equation*}
    We represent each element of this set in the Young diagram of $\lambda$ by using a star to specify the cell $u$, $1$ for the cell $v_1$, and $2$ for the cell $v_2$.

    \item We interpret the term $n^2$ as counting ordered pairs of cells in the Young diagram of $\lambda$. 
    Again, we represent such a choice by placing $1$ in the first cell of the pair and $2$ in the second. 
    We use $N(\lambda)$ to denote the set of such pairs.

    \item To interpret the term $\sum_{u \in \lambda}c(u)^2$, we first break up the Young diagram of $\lambda$ using \emph{diagonal hooks}. 
    These are the hooks corresponding to the cells of $\lambda$ on the main diagonal, i.e., the hooks corresponding to cells of the form $(i, i)$ in $\lambda$. 
    Note that each cell is in a unique diagonal hook.
    
    Suppose that the cell $u$ is in the diagonal hook corresponding to $(i, i)$. 
    Note that if $u = (i, i)$, then $c(u) = 0$. 
    If $u$ is below $(i, i)$, then $|c(u)|$ is the number of cells in the diagonal hook that are in the same column as $u$ and strictly above it. 
    Similarly, if $u$ is to the right of $(i, i)$, then $|c(u)|$ is the number of cells in the diagonal hook that are in the same row as $u$ and strictly to the left of it.
    
    Hence, we interpret $\sum_{u \in \lambda}c(u)^2$ as the number of ways to choose a cell and then pick an ordered pair of cells to the left or above it in the diagonal hook in which it lies. 
    We represent such a choice by placing a star in the cell $u$ and, just as before, use labels $1, 2$ to represent the chosen ordered pair. 
    We denote the set of these choices by $C(\lambda)$. 
    That is, $C(\lambda)$ consists of the tuples $(u, v_1, v_2)$ where $u$ is a cell in $\lambda$ and $v_1, v_2$ are cells in the same diagonal hook as $u$ and that lie either
    \begin{itemize}
        \item in the same row and strictly to the left of $u$, or
        \item in the same column and strictly above $u$.
    \end{itemize}
\end{enumerate}

\begin{example}\label{objex}
Let $\lambda = (5, 5, 4, 2)$. 
The elements
\begin{itemize}
    \item $((1, 3), (1, 4), (3, 3)) \in H(\lambda)$, 
    \item $((2, 2), (2, 2)) \in N(\lambda)$, and
    \item $((2, 5), (2, 4), (2, 2)) \in C(\lambda)$
\end{itemize}
are shown in \Cref{objexfig}. 
The diagonal hooks have been colored in the last figure for clarity.

\begin{figure}[H]
    \centering
    \hfill
    \ytableaushort
    {\none \none \star 1 \none,\none, \none \none 2}
    * {5,5,4,2}
    \hfill
    \ytableaushort
    {\none, \none {\scriptstyle 1, 2} \none}
    * {5,5,4,2}
    \hfill
    \begin{ytableau}
    *(lime!60) & *(lime!60)  &*(lime!60) & *(lime!60) & *(lime!60)\\
    *(lime!60) & *(yellow) 2  &*(yellow) & *(yellow) 1 & *(yellow) \star\\
    *(lime!60) & *(yellow)  &*(pink) & *(pink) \\
    *(lime!60) & *(yellow)
    \end{ytableau}
    \hfill
    \caption{Elements of $H(\lambda), N(\lambda), C(\lambda)$ from \Cref{objex}.}
    \label{objexfig}
\end{figure}

\end{example}

We prove \Cref{thm} by exhibiting a bijection
\begin{equation*}
    H(\lambda) \rightarrow N(\lambda) \cup C(\lambda).
\end{equation*}
We actually describe three maps from three disjoint subsets of $H(\lambda)$ to $N(\lambda) \cup C(\lambda)$ which combine to give us the required bijection. 
In examples, we use red labels for elements in $N(\lambda) \cup C(\lambda)$ to distinguish them from elements of $H(\lambda)$.

We first describe a map between certain subsets $H_1(\lambda) \subseteq H(\lambda)$ and $N_1(\lambda) \subseteq N(\lambda)$. 
We define $H_1(\lambda)$ to be the set of those elements of $H(\lambda)$ where
\begin{itemize}
    \item the cells labeled $1$ and $2$ are neither in the same row nor in the same column, or
    \item at least one of the cells labeled $1$ or $2$ coincides with the cell containing the star.
\end{itemize}

The subset $N_1(\lambda)$ consists of those elements of $N(\lambda)$ where
\begin{itemize}
    \item the chosen cells are of the form $(k, l)$ and $(i, j)$ where $k > i$ and $l < j$, or
    \item the chosen cells are either in the same row or same column.
\end{itemize}

\begin{lemma}
    There is a bijection $\phi_1$ between $H_1(\lambda)$ and $N_1(\lambda)$.
\end{lemma}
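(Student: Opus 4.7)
The plan is to take $\phi_1$ to be the projection that forgets the starred cell: $\phi_1(u, v_1, v_2) := (v_1, v_2)$. I then need to show that (a) this lands in $N_1(\lambda)$ and (b) the cell $u$ is uniquely recoverable from the pair, which together give the bijection.

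For (a), I will case-split along the two bullets defining $H_1(\lambda)$. If $v_1, v_2$ lie in different rows and different columns, then since both belong to $H(u)$, one must sit in the row of $u = (i,j)$ strictly to the right and the other in the column of $u$ strictly below; thus $\{v_1, v_2\} = \{(i,l), (k,j)\}$ with $l > j$ and $k > i$. Whichever cell is labeled $1$ and whichever is labeled $2$, the first defining bullet of $N_1(\lambda)$ is satisfied. If instead at least one of $v_1, v_2$ equals $u$, then because the other lies in $H(u)$ the two cells share a row or a column with $u$, and hence with each other, giving the second bullet.

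For (b), I would describe the inverse explicitly. Given $(w_1, w_2) \in N_1(\lambda)$: if $w_1 = w_2$, set $u = w_1$; if the two distinct cells share a row, set $u$ to be the leftmost; if they share a column, set $u$ to be the topmost; and in the remaining (first-bullet) case let $u = (i,j)$ where $i$ is the smaller of the two row indices and $j$ the smaller of the two column indices appearing in $\{w_1, w_2\}$. The final step would be to check that $(u, w_1, w_2)$ lies in $H_1(\lambda)$ and that the two constructions are mutually inverse; since the sub-cases partition $H_1(\lambda)$ and $N_1(\lambda)$ in parallel, this reduces to a routine case-by-case verification.

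The main obstacle, and really the only step not pure bookkeeping, is the first-bullet case of (b): one needs the corner cell $(i,j)$ to actually lie in $\lambda$. This follows immediately from the Young diagram property, since $\lambda$ contains a cell in row $i$ at column at least $j$ and a cell in column $j$ at row at least $i$, which forces $(i,j) \in \lambda$. Everything else amounts to matching the sub-cases defining $H_1(\lambda)$ to those defining $N_1(\lambda)$, and in particular to noting that the configurations excluded from $H_1(\lambda)$ (pairs sharing a row or column with neither cell equal to $u$) correspond exactly to the configurations excluded from $N_1(\lambda)$ (the ``other'' diagonal, where one cell is strictly above and to the left of the other).
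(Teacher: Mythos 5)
Your proof is correct and takes the same approach as the paper: $\phi_1$ is exactly the map that deletes the star, and your case analysis supplies the verification that the paper leaves as ``it can be checked.'' The explicit inverse (recovering $u$ as the coinciding cell, the leftmost/topmost cell, or the north-west corner cell, which lies in $\lambda$ by the Young diagram property) matches what the paper intends.
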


\begin{proof}
    We describe how to construct an element of $N_1(\lambda)$ when given an element of $H_1(\lambda)$.
    \begin{enumerate}
        \item If the cells labeled $1$ and $2$ in an element of $H_1(\lambda)$ are neither in the same row nor in the same column, then we associate the element in $N_1(\lambda)$ obtained by simply deleting the star.
    
        \item If at least one of the cells labeled $1$ or $2$ coincides with the cell containing the star, then in this case as well, we associate the element in $N_1(\lambda)$ obtained by simply deleting the star.
    \end{enumerate}
    It can be checked that this gives us the required bijection.
\end{proof}

\begin{figure}[H]
    \centering
    \ytableaushort
    {\none \none \star 1,\none, \none \none 2}
    * {4,4,3,2}
    \hspace{0.3cm}
    $\xrightarrow{\phi_1}$
    \hspace{0.2cm}
    \ytableaushort
    {\none \none \none {\textcolor{red}{\textbf{1}}},\none, \none \none {\textcolor{red}{\textbf{2}}}}
    * {4,4,3,2}
    \hfill
    \ytableaushort
    {\none {\scriptstyle \star\ 1} \none, \none, \none 2}
    * {4,3,2}
    \hspace{0.3cm}
    $\xrightarrow{\phi_1}$
    \hspace{0.2cm}
    \ytableaushort
    {\none {\textcolor{red}{\textbf{1}}} \none, \none, \none {\textcolor{red}{\textbf{2}}}}
    * {4,3,2}
    \vspace{0.5cm}

    \hfill
    \ytableaushort
    {\none {\scriptscriptstyle \star 1, 2} \none, \none, \none}
    * {3,3,2}
    \hspace{0.5cm}
    $\xrightarrow{\phi_1}$
    \hspace{0.4cm}
    \ytableaushort
    {\none {\scriptstyle \textbf{\textcolor{red}{1, 2}}} \none, \none, \none}
    * {3,3,2}
    \hfill
    \caption{Elements of $N_1(\lambda)$ associated to certain elements of $H_1(\lambda)$ under $\phi_1$.}
    \label{hn}
\end{figure}

We now describe a map between another subset of $H(\lambda)$ and the remaining elements of $N(\lambda)$.

The elements of $H(\lambda) \setminus H_1(\lambda)$ are those where
\begin{itemize}
    \item all the labeled cells are either in the same row or same column, and
    \item the starred cell does not contain any other label.
\end{itemize}
We use $H_2(\lambda)$ to denote the set of those elements of $H(\lambda) \setminus H_1(\lambda)$ where the labels do \emph{not} lie in a single diagonal hook.

We define $N_2(\lambda)$ to be the set $N(\lambda) \setminus N_1(\lambda)$. 
Note that $N_2(\lambda)$ consists of those elements of $N(\lambda)$ where the chosen cells are of the form $(k, l)$ and $(i, j)$ where $k < i$ and $l < j$.

\begin{lemma}\label{phi2}
    There is a bijection $\phi_2$ between $H_2(\lambda)$ and $N_2(\lambda)$.
\end{lemma}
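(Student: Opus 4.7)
The plan is to first unpack the definitions of $H_2(\lambda)$ and $N_2(\lambda)$ concretely, then define $\phi_2$ by a six-way case split, and finally verify bijectivity by describing the inverse. A row-type element of $H_2(\lambda)$ is a triple of the form $((i, j_0), (i, j_1), (i, j_2))$ with $j_0 < i$ and $j_0 < j_1, j_2 \le \lambda_i$ (the condition $j_0 < i$ is exactly what the ``not in a single diagonal hook'' requirement amounts to here); column-type elements are the transpose. An element of $N_2(\lambda)$ is an ordered pair of cells $((a_1, b_1), (a_2, b_2))$ with $a_1 \ne a_2$, $b_1 \ne b_2$, and $(a_1 - a_2)(b_1 - b_2) > 0$, so one cell is strictly above and to the left of the other.

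I would define $\phi_2$ on a row-type triple $((i, j_0), (i, j_1), (i, j_2))$ by comparing $j_1$ and $j_2$: send it to $((j_0, j_1), (i, j_2))$ when $j_1 < j_2$, to $((i, j_1), (j_0, j_2))$ when $j_1 > j_2$, and to $((j_0, j_0), (i, j_1))$ when $j_1 = j_2$. On column-type triples the definition is the transpose. Because $\lambda_{j_0} \ge \lambda_i$ (and dually for columns), all cells appearing in the image really lie in $\lambda$; a one-line check for each case shows the image is always in $N_2(\lambda)$.

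For bijectivity I would describe the inverse directly. Given an element $((a_1, b_1), (a_2, b_2)) \in N_2(\lambda)$, classify it by (i) whether $(a_1, b_1)$ is above-left of $(a_2, b_2)$ or below-right, and (ii) whether the above-left cell of the pair lies strictly above, strictly below, or on the main diagonal. These six combinations correspond one-to-one with the six cases in the definition of $\phi_2$, and reading off the preimage in each case is immediate.

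The main obstacle I anticipate is the tie case $j_1 = j_2$ (and symmetrically $i_1 = i_2$). The naive rule of lifting $v_1$ or $v_2$ up to row $j_0$ collapses those two cells into the same cell, yielding a same-column pair in $N_1(\lambda)$ instead of an element of $N_2(\lambda)$. I would resolve this by instead lifting $u$ to the diagonal corner $(j_0, j_0)$ of its diagonal hook, and by assigning the row-type ties to the above-left-first ordering of $N_2(\lambda)$ and the column-type ties to the below-right-first ordering. That sign convention is precisely what prevents the two families of ties from colliding, which is the one delicate point in the entire argument.
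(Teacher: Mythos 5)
Your construction is essentially the paper's $\phi_2$: lift the ``middle'' label to the row (resp.\ column) of the star, where it lands strictly above (resp.\ below) the main diagonal, handle the tie case by sending the star to the diagonal corner $(j_0,j_0)$ with a label convention that separates row ties from column ties, and invert by classifying the north-west cell of a pair in $N_2(\lambda)$ as above, below, or on the diagonal together with which label it carries. The one caveat is that ``the transpose'' applied literally to your row-tie rule would put label $1$ on the diagonal cell for column ties as well and collide with row ties, so the explicit ordering convention in your final paragraph (which is the mirror image of the paper's choice, but equally valid) must be taken as the actual definition.
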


\begin{proof}
    Suppose we are given an element of $H_2(\lambda)$. 
    We will construct an element of $N_2(\lambda)$ such that the position of the south-east labeled cell coincides with that of the last labeled cell along the row or column of the given element of $H_2(\lambda)$.
    
    First suppose that the element of $H_2(\lambda)$ has the labels in distinct cells. 
    If the labels are in row $i$, in the cells $(i, k), (i, l), (i, j)$ where $k < l < j$, then move the label in cell $(i, l)$ to cell $(k, l)$ and delete the star (in cell $(i, k)$). 
    Note that $k < i$ is implied by the fact that all labels are not in the same diagonal hook. 
    Similarly, if the labels are in column $j$ in the cells $(k, j), (l, j), (i, j)$ where $k < l < i$, then move the label in cell $(l, j)$ to cell $(l, k)$ and delete the star.
    
    \Cref{hrn1} will hopefully make this map clearer. 
    Note that when the labels of the element of $H_2(\lambda)$ are in the same row, the label \emph{not} in the rightmost cell moves to a cell strictly above the diagonal. 
    When the labels are in the same column, the label not in the lowermost cell moves to a cell strictly below the diagonal.
    % One can think that the star is choosing which row above the diagonal (if the star is in row $i$), or column below the diagonal (if the star is in column $j$) the label \emph{not} in cell $(i, j)$ should move to.
    
    \begin{figure}[H]
        \centering
        \begin{tikzpicture}[rotate = 90, scale = 0.75]
            \draw [step = 1cm, black] (0, 0) grid (4, 6);
            \draw [dashed, ultra thin, gray] (4, 6) -- (0, 2);
            \node at (0.5, 0.5) {$\mathbf{\textcolor{red}{2}}\ 2$};
            \node (star) at (0.5, 4.5) {$\star$};
            \node (r1) at (2.5, 1.5) {$\mathbf{\textcolor{red}{1}}$};
            \node (1) at (0.5, 1.5) {$1$};
    
            \draw [thin, red] (star) -- (2.5, 4.5);
            \draw [->, thin, red] (2.5, 4.5) -- (r1);
            \draw [->, thin, red] (1) -- (r1);
    
            \node at (2, -0.75) {$\mathbf{\cdots}$};
            \node at (-0.75, 3) {$\mathbf{\vdots}$};
        \end{tikzpicture}
        \hspace{2cm}
        \begin{tikzpicture}[rotate = 90, scale = 0.75]
            \draw [step = 1cm, black] (0, 0) grid (5, 7);
            \draw [dashed, ultra thin, gray] (5, 7) -- (0, 2);
            \node at (0.5, 0.5) {$\mathbf{\textcolor{red}{1}}\ 1$};
            \node (2) at (1.5, 0.5) {$2$};
            \node (star) at (3.5, 0.5) {$\star$};
            \node (r2) at (1.5, 5.5) {$\mathbf{\textcolor{red}{2}}$};
    
            \draw [thin, red] (star) -- (3.5, 5.5);
            \draw [->, thin, red] (3.5, 5.5) -- (r2);
            \draw [->, thin, red] (2) -- (r2);
    
            \node at (2.5, -0.75) {$\mathbf{\cdots}$};
            \node at (-0.75, 3.5) {$\mathbf{\vdots}$};
        \end{tikzpicture}
        \caption{Methods for relabeling an element of $H_2(\lambda)$.}
        \label{hrn1}
    \end{figure}
    
    Next, suppose that the element of $H_2(\lambda)$ has labels $1, 2$ in the same cell $(i, j)$. 
    If the starred cell is in the same row, in cell $(i, k)$, then label the cell $(i, j)$ with $1$, the cell $(k, k)$ with $2$, and delete the star. 
    Similarly, if the starred cell is in the same column, in cell $(k, j)$, then label the cell $(i, j)$ with $2$, the cell $(k, k)$ with $1$, and delete the star.
    
    \begin{figure}[H]
        \centering
        \begin{tikzpicture}[rotate = 90]
            \draw [step = 1cm, black] (0, 0) grid (4, 6);
            \draw [dashed, ultra thin, gray] (4, 6) -- (0, 2);
            \node at (0.5, 0.5) {$\mathbf{\textcolor{red}{1}}\ {\scriptstyle 1, 2}$};
            \node (star) at (0.5, 4.5) {$\star$};
            \node (r1) at (2.5, 4.5) {$\mathbf{\textcolor{red}{2}}$};
    
            \draw [->, thin, red] (star) -- (r1);
    
            \node at (2, -0.75) {$\mathbf{\cdots}$};
            \node at (-0.75, 3) {$\mathbf{\vdots}$};
        \end{tikzpicture}
        \hspace{2cm}
        \begin{tikzpicture}
            \draw [step = 1cm, black] (0, 1) grid (4, 6);
            \draw [dashed, ultra thin, gray] (0, 6) -- (4, 2);
            \node at (3.5, 1.5) {$\mathbf{\textcolor{red}{2}}\ {\scriptstyle 1, 2}$};
            \node (star) at (3.5, 4.5) {$\star$};
            \node (r1) at (1.5, 4.5) {$\mathbf{\textcolor{red}{1}}$};
    
            \draw [->, thin, red] (star) -- (r1);
    
            \node at (4.5, 3.5) {$\mathbf{\cdots}$};
            \node at (2, 1 - 0.75) {$\mathbf{\vdots}$};
        \end{tikzpicture}
        \caption{Methods for relabeling an element of $H_2(\lambda)$.}
        \label{hrn2}
    \end{figure}
    
    To define the inverse map, we are given an element of $N_2(\lambda)$ with labels on two cells $(k, l), (i, j)$ where $k < i$ and $l < j$. 
    Consider the rectangle in the Young diagram containing the first $i$ rows and first $j$ columns. 
    Now look at whether the the cell $(k, l)$ is
    \begin{itemize}
        \item above the diagonal,
        \item below the diagonal,
        \item on the diagonal with label $1$, or
        \item on the diagonal with label $2$.
    \end{itemize}
    
    If it is above the diagonal, the star in the element of $H_2(\lambda)$ that we want to construct should be in cell $(i, k)$, the label in cell $(k, l)$ should be moved to $(i, l)$, and the label in cell $(i, j)$ should remain where it is. 
    One can check that this is indeed an element of $H_2(\lambda)$. 
    One can similarly define the inverse map in the other three cases.
\end{proof}

We now exhibit a bijection between the remaining elements of $H(\lambda)$ and those of $C(\lambda)$. 
We use $H_3(\lambda)$ to denote the set $H(\lambda) \setminus (H_1(\lambda) \cup H_2(\lambda))$. 
Note that the elements of $H_3(\lambda)$ are those elements of $H(\lambda)$ where
\begin{itemize}
    \item all the labeled cells are either in the same row or same column,
    \item the starred cell does not contain any other label, and
    \item all the labels lie in a single diagonal hook.
\end{itemize}

\begin{lemma}
    There is a bijection $\phi_3$ between $H_3(\lambda)$ and $C(\lambda)$.
\end{lemma}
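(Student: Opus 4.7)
The plan is to partition both $H_3(\lambda)$ and $C(\lambda)$ along a row-versus-column dichotomy and then to define $\phi_3$ on each piece by a simple reflection of positions within one row (or column) of a diagonal hook.

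First I would analyse $H_3(\lambda)$. Since the star and both labels lie in a common row or common column, there are two types. In the \emph{row type}, the star is at $u = (i, j_0)$ and the labels at $(i, j_1), (i, j_2)$ with $j_0 < j_1, j_2$. The requirement that all three cells lie in a single diagonal hook forces $j_0 \geq i$: otherwise $(i, j_0)$ would belong to the diagonal hook of $(j_0, j_0)$ while $(i, j_k)$ with $j_k > j_0$ would not. Hence row-type elements are parametrised by $i \leq j_0 < j_1, j_2 \leq \lambda_i$, with all three cells in the diagonal hook of $(i, i)$. The \emph{column type} is fully analogous: star at $(i_0, j)$, labels at $(i_1, j), (i_2, j)$ with $j \leq i_0 < i_1, i_2 \leq \lambda'_j$.

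Next I would decompose $C(\lambda)$ in a parallel way. Its \emph{above-diagonal} elements are those with $u = (i, j)$, $j > i$, and $v_1, v_2$ in row $i$ at columns in $\{i, \ldots, j-1\}$; its \emph{below-diagonal} elements are obtained by transposition (cells on the diagonal have content $0$ and contribute nothing).

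For the bijection, I would map the row type of $H_3(\lambda)$ to the above-diagonal part of $C(\lambda)$ by reflecting column indices about the midpoint of $\{i, \ldots, \lambda_i\}$, namely $c \mapsto i + \lambda_i - c$. Applied coordinate-wise to $(j_0, j_1, j_2)$, this reflection sends the smallest column (the star) to the largest (the new $u$), and it interchanges the ranges $[i, \lambda_i - 1]$ and $[i+1, \lambda_i]$, so the constraints $i \leq j_0 < j_1, j_2 \leq \lambda_i$ translate precisely into the constraints defining an above-diagonal element of $C(\lambda)$. Being an involution on each row, this map is automatically a bijection on the row pieces; an analogous reflection within $\{j, \ldots, \lambda'_j\}$ identifies the column type of $H_3(\lambda)$ with the below-diagonal part of $C(\lambda)$.

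The main obstacle I anticipate is not the bijection itself but the structural observation in the first step---recognising that the single-diagonal-hook clause pins the star to a position on or past the main diagonal within its row (or column), so that an $H_3$ element is literally the reflection of a $C$ element in the same row (or column) of the same diagonal hook. Once this is understood, the bijection is essentially forced and the remaining verification is a short check of inequalities.
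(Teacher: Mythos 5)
Your proof is correct and takes essentially the same approach as the paper: both split $H_3(\lambda)$ into row and column types and rest on the observation that the single-diagonal-hook condition forces the star to sit at the near end of the hook's row (or column) segment, while in $C(\lambda)$ the star sits at the far end. The only (immaterial) difference is the explicit bijection on each piece: you reflect all three positions about the midpoint of $\{i,\dots,\lambda_i\}$ via $c \mapsto i+\lambda_i-c$, whereas the paper simply swaps the contents of the leftmost and rightmost labeled cells and leaves any intermediate labeled cell fixed; both are involutions carrying one constraint set onto the other.
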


\begin{proof}
    Starting with an element of $H_3(\lambda)$, if the labels are all in one row, then we swap the leftmost labeled cell with the rightmost one. 
    Similarly, if the labeled cells are all in one column, we swap the highest labeled cell with the lowest one.
    
    % Starting with such an element of $H'(\lambda)$, we consider two cases:
    % \begin{enumerate}
    %     \item If all labels are in distinct cells, simply swap the star with the last (right most or bottom most) label of the row or column.
    
    %     \item If the labels $1, 2$ are in the same cell, relabel it using a star and relabel the old starred cell using $1, 2$.
    % \end{enumerate}
    
    Since the star in an element of $H_3(\lambda)$ must be above (or to the left) of the labels $1, 2$ but the star in an element of $C(\lambda)$ must be below (or to the right) of the labels $1, 2$, the map described above is a bijection.
\end{proof}

\begin{figure}[H]
    \centering
    \ytableaushort
    {\none,\star, \none, 2, 1}
    * {3, 3, 2, 1, 1}
    \hspace{0.3cm}
    $\xrightarrow{\phi_3}$
    \hspace{0.2cm}
    \ytableaushort
    {\none, {\color{red} \textbf{1}}, \none, {\color{red} \textbf{2}}, {\color{red} \mathbf{\star}}}
    * {3, 3, 2, 1, 1}
    \hspace{1.5cm}
    \ytableaushort
    {\none,\none \star \none {\scriptstyle 1, 2}}
    * {5, 5, 2}
    \hspace{0.3cm}
    $\xrightarrow{\phi_3}$
    \hspace{0.2cm}
    \ytableaushort
    {\none, \none {\color{red}{\scriptstyle \textbf{1, 2}}} \none {\color{red} \mathbf{\star}}}
    * {5, 5, 2}
    \caption{Elements of $C(\lambda)$ associated to certain elements of $H_3(\lambda)$ under $\phi_3$.}
    \label{hc}
\end{figure}

\begin{proof}[Proof of \Cref{thm}]
    Combine the previous three lemmas with the fact that $H(\lambda) = H_1(\lambda) \sqcup H_2(\lambda) \sqcup H_3(\lambda)$ and $N(\lambda) = N_1(\lambda) \sqcup N_2(\lambda)$.
\end{proof}

\begin{remark}
    The inductive proof of \Cref{thm} in \cite{312771} is obtained by deleting an \emph{outside cell} of the partition $\lambda$. 
    This is a cell that has cells neither to its right nor below it. 
    Note that the deletion of such a cell gives us a partition, say $\mu$. 
    There is a natural way to see $H(\mu)$ as a subset of $H(\lambda)$ and similarly for $N(\mu)$ and $C(\mu)$. 
    This shows that our bijection also gives us a bijection between $H(\lambda) \setminus H(\mu)$ and $(N(\lambda) \setminus N(\mu)) \cup (C(\lambda) \setminus C(\mu))$. 
    Equating the sizes of these sets is precisely what is done in the proof given in \cite{312771}.
\end{remark}

\section{Rectangles in partitions}

\begin{definition}
    A \emph{thick rectangle} in (the Young diagram of) $\lambda$ is a rectangle made up of cells in $\lambda$ such that both the height and width of the rectangle are greater than $1$. 
    Other rectangles in $\lambda$ are called \emph{thin rectangles}.
\end{definition}

It is straightforward to see that the total number of rectangles in $\lambda$ is $\sum_{u = (i, j) \in \lambda} ij$ and that the number of thick rectangles is $\sum_{u = (i, j) \in \lambda} (i - 1)(j - 1)$. 
As a consequence of the map $\phi_2$ described in the previous section, we obtain another nice formula for the number of rectangles contained in the Young diagram of a partition.

The elements of $N_2(\lambda)$ described in the previous section correspond to (twice the number of) thick rectangles in the Young diagram of $\lambda$. 
The chosen cells in an element of $N_2(\lambda)$ form the north-west and south-east corners of the rectangle. 
Note that there are two ways to label the cells.

Also, one can check that the number of elements of $H_2(\lambda)$ with starred cell at $u$ is given by $\tilde{h}(u)^2$, where we define the \emph{partial hook length} of the cell $u = (i, j)$ as
\begin{equation*}
    \tilde{h}(u) =
    \begin{cases}
        0,\text{ if }i = j,\\
        \lambda_i - j,\text{ if }i > j,\text{ and}\\
        \lambda'_j - i,\text{ if }i < j.
    \end{cases}
\end{equation*}
The quantity $\lambda_i - j$ is usually denoted by $\operatorname{arm}(u)$ and $\lambda'_j - i$ by $\operatorname{leg}(u)$. 
Note that $h(u) = \operatorname{arm}(u) + \operatorname{leg}(u) + 1$.

It is also easy to see that the number of thin rectangles in $\lambda$ is $\sum_{u \in \lambda} h(u)$. 
Hence, from the above observations and \Cref{phi2} we get the following result.

\begin{result}
    For any $n \geq 1$ and $\lambda \vdash n$, the number of rectangles in the Young diagram of $\lambda$ is
    \begin{equation*}
        \sum_{u \in \lambda} h(u) + \frac{1}{2}\sum_{u \in \lambda} \tilde{h}(u)^2
    \end{equation*}
    where the first term counts thin rectangles and second counts thick rectangles.
\end{result}

\section{Other powers}

We first study what happens when we consider sums of higher powers of hook lengths and contents.

\begin{proposition}
    For any $n \geq 1$, $\lambda \vdash n$, and $k \geq 3$, we have
    \begin{equation*}
        \sum_{u \in \lambda} h(u)^k \leq n^k + \sum_{u \in \lambda} |c(u)|^k
    \end{equation*}
    with equality holding if and only if $\lambda$ is a hook (i.e., $\lambda = H((1, 1))$).
\end{proposition}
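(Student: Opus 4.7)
The plan is to derive the inequality from \Cref{thm} via a majorization argument. I would introduce the multisets
\[
A := \{n\} \cup \{|c(u)| : u \in \lambda\}, \qquad B := \{h(u) : u \in \lambda\} \cup \{0\},
\]
each of cardinality $n+1$. By \Cref{thm} we have $\sum_{a \in A} a^2 = \sum_{b \in B} b^2$, and the proposition becomes the assertion that $\sum_{a \in A} a^k \geq \sum_{b \in B} b^k$ for every $k \geq 2$, with equality at $k = 2$ and equality for $k \geq 3$ precisely when $\lambda$ is a hook.

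The key step is to prove that the squared multiset $A^2 := \{a^2 : a \in A\}$ majorizes $B^2 := \{b^2 : b \in B\}$. Once this is granted, Karamata's inequality applied to the function $x \mapsto x^{k/2}$ (convex on $[0, \infty)$ for $k \geq 2$ and strictly convex for $k > 2$) yields $\sum_{a \in A}(a^2)^{k/2} \geq \sum_{b \in B}(b^2)^{k/2}$, which is exactly $n^k + \sum_u |c(u)|^k \geq \sum_u h(u)^k$. For $k \geq 3$, strict convexity forces equality in Karamata iff $A^2 = B^2$ as multisets; in particular their maxima must agree. Since $\max A^2 = n^2$ (as $|c(u)| < n$ for every $u$ whenever $n \geq 2$) and $\max B^2 = h((1,1))^2 = (\lambda_1 + \lambda'_1 - 1)^2$, this forces $n = \lambda_1 + \lambda'_1 - 1$, which by counting cells in the first row and first column of $\lambda$ happens exactly when $\lambda$ has no cells outside those two lines, i.e., when $\lambda$ is a hook. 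Conversely, for a hook $\lambda = (m, 1^s)$ one checks by direct comparison of multisets that $A = B$, confirming equality.

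The main obstacle is the majorization $A^2 \succeq B^2$ itself. Since the total sums already agree by \Cref{thm}, this reduces to the family of tail bounds
\[
(n^2 - t)_+ + \sum_{u \in \lambda}(c(u)^2 - t)_+ \;\geq\; \sum_{u \in \lambda}(h(u)^2 - t)_+ \qquad (t \geq 0).
\]
A natural ingredient is the cellwise estimate $h(u) + |c(u)| \leq n$: if $u = (i, j)$ with $i \leq j$, the $|c(u)| = j - i$ cells $(i, i), (i, i+1), \dots, (i, j-1)$ lie in $\lambda$ but outside the hook of $u$, so $n - h(u) \geq |c(u)|$, and the case $i > j$ is symmetric. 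I expect the remainder of the majorization to follow from a careful combinatorial comparison of the tails of the hook-length and content distributions, perhaps by refining the decomposition of $H(\lambda)$, $N(\lambda)$, and $C(\lambda)$ used in the bijection of \Cref{proofsec}.
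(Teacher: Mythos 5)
Your approach is genuinely different from the paper's (which directly generalizes the three maps $\phi_1,\phi_2,\phi_3$ of \Cref{proofsec} to labels in $\{1,\dots,k\}$ and shows the result is an injection $H^k(\lambda)\to N^k(\lambda)\cup C^k(\lambda)$ that is surjective exactly for hooks), but as written it has a genuine gap: the majorization $A^2\succeq B^2$ is the entire content of the argument, and you do not prove it. Everything downstream --- Karamata, the equality analysis via strict convexity, the identification of the equality case with hooks --- is correct \emph{conditional} on that claim, and your treatment of those steps is fine (including the verification that $A=B$ for a hook and that $\max B^2=\max A^2$ forces $\lambda_1+\lambda'_1-1=n$). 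But the only piece of the majorization you actually establish is the cellwise bound $h(u)+|c(u)|\le n$, which yields $h(u)\le n$ and hence only the \emph{first} partial-sum inequality $\max B^2\le\max A^2$. The remaining $n$ partial-sum inequalities --- equivalently, that for each $j$ the sum of the $j$ smallest values among $\{c(u)^2\}_{u\in\lambda}$ is at most the sum of the $j-1$ smallest values among $\{h(u)^2\}_{u\in\lambda}$ --- are left as "I expect the remainder \dots to follow from a careful combinatorial comparison," which is an admission that the key step is missing. Note also that these inequalities can be tight in nontrivial ways (for the $4\times4$ square the top-$15$ partial sums are $296$ versus $295$), so this is not a step one can wave through.

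It is also worth flagging that you are attempting to prove something strictly stronger than the proposition: majorization of $A^2$ over $B^2$ is equivalent to $\sum f(a)\ge\sum f(b)$ for \emph{all} convex $f$, whereas the proposition only concerns the functions $x\mapsto x^{k/2}$. So even if the proposition is true (it is), your intermediate claim does not follow from it and needs an independent proof; conversely, the paper's injection $H^k(\lambda)\hookrightarrow N^k(\lambda)\cup C^k(\lambda)$ gives the $k$-th power inequalities directly, one $k$ at a time, without ever needing the stronger distributional statement. If you can supply a proof of the majorization (numerical evidence suggests it is true), your route would be an interesting strengthening of the proposition; as it stands, it is not a proof.
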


% \begin{remark}
%     Assuming the above proposition, the following is straightforward to verify: 
%     If we use $c(u)$ instead of $|c(u)|$ in the proposition, equality holds for odd values of $k$ if and only if $\lambda = (n)$. 
%     The statement remains unchanged for even values of $k$.
% \end{remark}

\begin{proof}
    We define $H^k(\lambda), N^k(\lambda)$ and $C^k(\lambda)$ analogously to \Cref{proofsec} (using labels in $[k] := \{1, 2, \ldots, k\}$ instead of just $1, 2$). 
    To prove the result, we use simple generalizations of the maps described in \Cref{proofsec}. 
    But we now show that these maps combine to an injection from $H^k(\lambda)$ into $N^k(\lambda) \cup C^k(\lambda)$ and that it is a bijection exactly when $\lambda$ is a hook.

    \noindent\textbf{Map $\phi_1^k$:}
    Start with an element of $H^k(\lambda)$ that either has
    \begin{itemize}
        \item at least one label from $[k]$ strictly to the right of the starred cell and at least one label strictly below the starred cell, or
        \item all labeled cells in the same row or column with the starred cell having at least one label from $[k]$.
    \end{itemize}
    To such an element of $H^k(\lambda)$, we associate the element of $N^k(\lambda)$ obtained by deleting the star.

    \noindent\textbf{Map $\phi_2^k$:}
    Start with an element of $H^k(\lambda)$ such that
    \begin{itemize}
        \item all the labeled cells are either in the same row or same column,
        \item the starred cell does not contain any other label, and
        \item all labels do \emph{not} lie in a single diagonal hook.
    \end{itemize}
    We now describe a method to associate an element of $N^k(\lambda)$ to such an element of $H^k(\lambda)$.

    First suppose that the element of $H^k(\lambda)$ does not have a cell labeled with all elements of $[k]$. 
    If the labels are in row $i$, in the cells $(i, j_0), (i, j_1), \ldots, (i, j_m)$ where $j_0 < j_1 < \cdots < j_m$. 
    Note that the starred cell is $(i, j_0)$. 
    Move the labels in cell $(i, j_l)$ to cell $(j_0, j_l)$ for all $l \in [m - 1]$ and delete the star. 
    The map for when all the labels are in the same column is defined analogously.

    Next suppose that the element of $H^k(\lambda)$ has a cell $(i, j)$ labeled with all elements of $[k]$. 
    If the starred cell is $(i, l)$, then we move the label $k$ from cell $(i, j)$ to cell $(l, l)$ and delete the star. 
    If the starred cell is $(l, j)$, then we move the label $1$ from cell $(i, j)$ to cell $(l, l)$ and delete the star.

    \noindent\textbf{Map $\phi_3^k$:}
    Just as for the $k = 2$ case, we now exhibit a bijection between the remaining elements of $H^k(\lambda)$ and the elements of $C^k(\lambda)$.
    
    Starting with such an element of $H^k(\lambda)$, if the labels are all in one row, then we swap the leftmost labeled cell with the rightmost one. 
    Similarly, if the labeled cells are all in one column, we swap the highest labeled cell with the lowest one.

    When $\lambda$ is a hook, the maps $\phi_1^k$ and $\phi_3^k$ cover all elements of $H^k(\lambda)$ and those of $N^k(\lambda) \cup C^k(\lambda)$ and hence gives us the required bijection. 
    If $\lambda$ is not a hook, it must contain the cell $(2, 2)$. 
    The element of $N^k(\lambda)$ that has the label $2$ in the cell $(1, 1)$ and the rest of the labels in cell $(2, 2)$ is in the image of neither $\phi_1^k$ nor $\phi_2^k$.
\end{proof}

We now consider the lower power $k = 1$.

\begin{proposition}\label{1power}
    For any $n \geq 1$, $\lambda \vdash n$, we have
    \begin{equation*}
        \sum_{u \in \lambda} h(u) \geq n + \sum_{u \in \lambda} |c(u)|
    \end{equation*}
    with equality holding if and only if $\lambda$ is a hook.
\end{proposition}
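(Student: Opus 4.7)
The plan is to reuse the bijective framework of \Cref{proofsec} with one label in place of two. Set $H^1(\lambda) = \{(u, v) : u \in \lambda,\ v \in H(u)\}$, let $N^1(\lambda) = \lambda$ (viewed as a set of single cells), and let $C^1(\lambda)$ consist of pairs $(u, v)$ such that $v$ lies in the same diagonal hook as $u$ and is either strictly to the left of $u$ in its row or strictly above $u$ in its column. Then $|H^1(\lambda)| = \sum_{u} h(u)$, $|N^1(\lambda)| = n$, and $|C^1(\lambda)| = \sum_{u} |c(u)|$, so the claim is equivalent to an injection $N^1(\lambda) \sqcup C^1(\lambda) \hookrightarrow H^1(\lambda)$, which is a bijection exactly when $\lambda$ is a hook.

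Specializing the three maps of \Cref{proofsec} to $k = 1$, the map $\phi_1^1$ sends $(u, u) \in H^1(\lambda)$ to $u \in N^1(\lambda)$ and is visibly a bijection onto $N^1(\lambda)$. The map $\phi_3^1$ sends $(u, v) \in H^1(\lambda)$ with $u \neq v$ and $u, v$ in a common diagonal hook to $(v, u)$; since $v \in H(u) \setminus \{u\}$ places $u$ either strictly to the left of $v$ in its row or strictly above $v$ in its column, the image lies in $C^1(\lambda)$, and swapping coordinates provides a two-sided inverse, so $\phi_3^1$ is a bijection onto $C^1(\lambda)$. The $k = 2$ map $\phi_2$ absorbed ``off-diagonal-hook'' configurations into $N(\lambda)$, but for $k = 1$ the target $N^1(\lambda)$ is already entirely covered by $\phi_1^1$, leaving no room whatsoever for a $\phi_2^1$. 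This is precisely the source of the slack.

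This immediately yields the inequality, and $\phi_1^1 \sqcup \phi_3^1$ surjects onto $H^1(\lambda)$ iff every $(u, v) \in H^1(\lambda)$ with $u \neq v$ lies in a common diagonal hook. If $\lambda$ is a hook there is only one diagonal hook and equality is automatic. Otherwise, $(2, 2) \in \lambda$, and the pair $u = (2, 1)$, $v = (2, 2)$ satisfies $v \in H(u)$ while $u \in H((1, 1))$ and $v \in H((2, 2))$, exhibiting a pair in distinct diagonal hooks and forcing strict inequality. The main obstacle is conceptual rather than technical: one must see that the disappearance of $\phi_2^k$ at $k = 1$ is what flips the direction of the inequality between $k = 2$ and $k = 1$, and correctly identify the unmatched elements of $H^1(\lambda)$ as exactly those witnessing non-hook shapes.
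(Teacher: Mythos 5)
Your proof is correct and follows essentially the same route as the paper: the paper defines the injection $N^1(\lambda)\cup C^1(\lambda)\hookrightarrow H^1(\lambda)$ by starring the labeled cell (for $N^1$) and swapping the star with the label (for $C^1$), which is exactly the inverse of your $\phi_1^1\sqcup\phi_3^1$, and it likewise identifies the image as the pairs lying in a common diagonal hook, which is all of $H^1(\lambda)$ precisely for hooks. Your explicit witness $((2,1),(2,2))$ for non-hooks is a detail the paper leaves to the reader.
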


\begin{proof}
    We define $H^1(\lambda), N^1(\lambda)$ and $C^1(\lambda)$ just as in the proof of the previous proposition. 
    An injection from $N^1(\lambda) \cup C^1(\lambda)$ into $H^1(\lambda)$ is defined using similar ideas as before. 
    For an element of $N^1(\lambda)$, we obtain the corresponding element of $H^1(\lambda)$ by adding a star in the same cell that contains the label $1$. 
    For an element of $C^1(\lambda)$, we obtain the corresponding element of $H^1(\lambda)$ by swapping the star and the label $1$.

    It is easy to check that this is an injection. 
    The elements of $H^1(\lambda)$ that are in the image of this injection are those where the star and the label $1$ are in the same diagonal hook. 
    Such elements exhaust all of $H^1(\lambda)$ if and only if $\lambda$ is a hook.
\end{proof}

\section{Acknowledgements}

The author thanks Priyavrat Deshpande, Amrutha P, and Richard Stanley for their comments on this article. 
The author also thanks the referees for their helpful suggestions, which improved the quality of this paper. 
In particular, \Cref{1power} was suggested by a referee. 
This work was done while the author was at the Chennai Mathematical Institute, India and partially supported by a grant from the Infosys Foundation.

% \bibliographystyle{abbrv}
% \bibliography{refs}

\end{document}